\documentclass[a4paper,12pt, legno]{article}
\usepackage{amsmath}
\usepackage{amsfonts}
\usepackage{amsthm}
\usepackage{mathrsfs}
\usepackage[T1]{fontenc}
\usepackage{amsmath}
\usepackage{amsfonts}
\usepackage{amsthm}
\usepackage{mathrsfs}
\usepackage{amssymb}
\usepackage{pst-node}
\usepackage{tikz-cd}
\newtheorem{thm}{Theorem}
\newtheorem{prop}[]{Proposition}
\newtheorem{lem}[]{Lemma}

\newtheorem{cor}{Corollary}

\newtheorem{rem}[]{Remark}
\newcommand{\Rset}{\mathbb{R}}

\begin{document}
January 27, 2023.

\begin{center}
\textbf{Some definite integrals arising from selfdecomposable characteristic functions.}

\medskip
Zbigniew J. Jurek

\medskip
University of Wroc\l aw, Wroc\l aw, Poland;

 \ \ $zjjurek@math.uni.wroc.pl$ \ \mbox{and} \ \ $www.math.uni.wroc.pl/^{\sim}zjjurek$
\end{center}

\medskip
\medskip
\emph{Abstract.} In the probability theory \emph{selfdecomposable, or class $L_0$ distributions} play an important role as they are limiting distributions of normalized partial sums of sequences of independent, not necessarily identically distributed, random variables. The class $L_0$ is quite large and includes many known classical  distributions and statistics. For this note the most important feature of the selfdecomposable variables are their random integral representation with respect to L\'evy process. From those random integral representation we get equality of logarithms of some characteristic functions.  These allows us to get formulas for some definite integrals, some of them probably were unknown before.

\medskip
\emph{ 2020 Mathematics Subject Classifications: Primary: 60E07,60E10; Secondary: 60G51,60H05}

\medskip
\emph{ Key words and phrases:  infinite divisibility; selfdecomposability; L\'evy process;  Urbanik classes; random integral;
hyperbolic characteristic functions; log-gamma distribution;  generalized logistic distribution; Meixner distributions; Feller-Spitzer distributions}

\medskip
\medskip
The class $L$ of \emph{selfdecomposable distributions} appears in the classical probability theory as a class of limiting distributions obtained from infinitesimal triangular arrays arising from sequences of independent random variables. In the case of independent and identically distributed variables we get the class $\mathbb{S}$ of \emph{stable distributions}, which contains the normal distributions. On the other hand, when one considers  arbitrary infinitesimal triangular arrays we end-up with the class $ID$ of \emph{infinitely divisible distributions}. Thus we have the inclusions $\mathbb{S}\subsetneq L \subsetneq ID.$

Infinitely divisible random  variables $X\in ID$ are uniquely determined by their characteristic functions
$$
\phi_X(t):=\mathbb{E}[\exp(itX)]= \exp\big(ita - \frac{1}{2}\sigma^2t^2+\int_{\Rset\setminus{\{(0)\}}} (e^{itx}-1-\frac{itx}{1+x^2})M_X(dx) \big),
$$
where
$ a\in \Rset, \ \sigma^2\ge 0 \ (\mbox{Gaussian part}), \ \int_{\Rset\setminus{\{(0)\}}}\min(1,x^2)M(dx)<\infty.$

\medskip
The measure $M$ is called \emph{the L\'evy measure} of $X$.  The above integral formula is refered to as  \emph{ the L\'evy-Khintchine representation.}
The triple $a,\sigma^2$ and $M$ is uniquely determined by $X$.

\noindent The classic references to those topics are: Feller (1966), Gnedenko and Kolomogorov (1954) or Loeve (1963).

For infinite divisibility for Banach space valued variables we refer to Araujo and Gine (1980), Chapter 3.

The class $L$ is quite large and includes many classical distributions in probability and statistics; for examples cf. Jurek (1997) and (2021). Moreover, during  last decades the selfdecompsability appeared  in mathematical finance (Carr, Geman , Madan and Yor (2007), a book by Schoutens (2003)) and in statistical physics ( deConinck (1984), deConinck and Jurek (2000), and Jurek (2001)).

\medskip
The aim of this note, (after the preliminary Proposition 1, in the Introduction),  is to show how some definite integrals can be computed from the selfdecomposable characteristic functions; cf. Lemmas 1 , 2 and Corollary 1 \, (for the hyperbolic characteristic functions)  and Corollaries 2-6, in Sections 2 and 3 \, (for the characteristic function expressed via Euler's Euler's gamma function; in particular, for the  Meixner or Feller-Spitzer distributions).

\medskip
\medskip
\textbf{Introduction.}

 A random variable $X$ is called \emph{selfdecomoposable}(in symbols: $X\in L_0$), if
\begin{equation}
\forall\,(t>0)\,\exists \, (X_t \in ID \ \mbox{independent of X}) \ \ X\stackrel{d}{=} e^{-t}X+X_t.
\end{equation}
And inductively, for $k\ge 1$, we define, decreasing sequence of \emph{the Urbanik classes} $L_k$,  as follows:
$$
X\in L_k\ \mbox{iff, in (1),} \ X_t\in L_{k-1}, \ \mbox{for all $t>0$; cf. Urbanik (1972),(1973);} \  (1a)
$$

The "remainders" $(X_t,t>0)$ in (1) satisfy the following\emph{ cocycle equation} $X_{t+s}\stackrel{d}{=}e^{-t}X_s+X_t$. It allows to construct a cadlag L\'evy process and   to infer that
the selfdecomposability of $X$ is equivalent to the claim that there exits unique, in distribution, a L\'evy process  $(Y_X(t),t\ge 0)$, such that
\begin{equation}
X\in L_0 \ \mbox{iff} \ \ X\stackrel{d}{=}\int_0^\infty e^{-t}dY_X(t), \ \  \mbox{and} \ \ \mathbb{E}[\log(1+|Y_X(1)|)]<\infty,
\end{equation}
cf. Jurek and Vervaat (1983),Theorem 3.2, pp. 252-253 or Jurek and Mason (1993), Theorem 3.6.8, pp. 124-126  and Jurek (1982) for a Banach space valued random variables.

To the random variable $Y_X(1)$ we refer to as \emph{the background driving random variable of $X$}; is short: BDRV.

\medskip
\newpage
For this note crucial are the following facts listed in :
\begin{prop}
(a) \ Let  $X$ and $Y_X(1)$ be a selfdecopmosable variable and its BDRV, respectively, in the random integral representation (2). Then
for the characteristic functions $\phi_X(t):=\mathbb{E}[\exp(itX)]$ and $\psi_X(t):=\mathbb{E}[\exp(itY_X(1))]$ we have
\begin{equation}
\psi_X(t)= \exp(t (\log\phi_X(t))^\prime), \ t \neq 0, \ \psi_X(0)=1.
\end{equation}
(b) \ Equivalently, if $[a,\sigma^2,M_X]$ is the triple in the L\'evy-Khintchine formula for $X$ and similarly $[b,s^2,N_X]$ is the triple for $Y_X(1)$ then
\begin{multline}
M_X(B)=\int_{0}^\infty N_X(e^{t}B)dt, \   \mbox{ Borel $B \subset\Rset\setminus{\{(0)\}}$}; \  M_X(\Rset\setminus{\{(0)\}} )= \infty; \\ M_X(dx)=k_X(x)dx, \ \
N_X(dx)= h_X(x)dx, \ \  h_X(x)=(-xk_X(x))^\prime\ge0,  \\   s^2=2\sigma^2,  \ \ \ b=a+\int_{\Rset\setminus{\{((0)\}}}\frac{x}{1+x^2}-\arctan(x))h_X(x)dx.
\end{multline}
\emph{(c)\ [The condition $(-xk_X(x))^\prime\ge 0$ is equivalent to the condition that the function  $xk_X(x)$ is not increasing of both half-lines. This means that
\newline $[a,\sigma^2,k_X(x)]$ is selfdecomposable variable.]}

(d) Hence, by comparing  (3) and $[b,s^2,N_X]$ in (4), we conclude the following identity for $t \in \Rset$,
\begin{multline}
itb -\frac{1}{2}s^2t^2+ \int_{\Rset\setminus{\{((0)\}}}(e^{itx}-1-\frac{itx}{1+x^{2}})h_X(x)dx  =  t(\log\phi_X(t))^\prime .
\end{multline}
\end{prop}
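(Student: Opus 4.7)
\emph{For part (a)}, the plan is to exploit the random integral representation (2). Since $(Y_X(s))_{s\ge 0}$ is a L\'evy process with independent, stationary increments, approximating $\int_0^T e^{-s}dY_X(s)$ by Riemann sums and passing to the limit yields $\log\mathbb{E}[\exp(it\int_0^T e^{-s}dY_X(s))]=\int_0^T\log\psi_X(te^{-s})ds$. The moment condition $\mathbb{E}[\log(1+|Y_X(1)|)]<\infty$ from (2) guarantees that letting $T\to\infty$ gives $\log\phi_X(t)=\int_0^\infty\log\psi_X(te^{-s})ds$. The substitution $u=te^{-s}$ (for $t>0$) rewrites this as $\log\phi_X(t)=\int_0^t u^{-1}\log\psi_X(u)du$; differentiating with respect to $t$ yields $(\log\phi_X(t))'=t^{-1}\log\psi_X(t)$, which is exactly (3). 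A parallel argument handles $t<0$.

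\emph{For part (b)}, the plan is to compare L\'evy--Khintchine triples on the two sides of (3). Starting from the triple $[a,\sigma^2,k_X(x)dx]$ for $\phi_X$, I differentiate the L--K exponent in $t$ and multiply by $t$. The Gaussian piece $-\tfrac{1}{2}\sigma^2t^2$ turns into $-\sigma^2t^2$, forcing $s^2=2\sigma^2$. For the jump part, the key step is an integration by parts on each half-line, schematically
\begin{equation*}
\int_0^\infty itxe^{itx}k_X(x)dx=-\int_0^\infty(e^{itx}-1)(xk_X(x))'dx=\int_0^\infty(e^{itx}-1)h_X(x)dx,
\end{equation*}
together with the analogous identity $\int_0^\infty\frac{itx}{1+x^2}k_X(x)dx=it\int_0^\infty\arctan(x)h_X(x)dx$. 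Reassembling and reinstating the centering $\frac{itx}{1+x^2}$ inside the L--K integrand for $\psi_X$ with density $h_X$ leaves a residual drift $\int(\frac{x}{1+x^2}-\arctan(x))h_X(x)dx$ that must be absorbed into $b=a+\cdots$, which is precisely the last formula in (4). The measure-level identity $M_X(B)=\int_0^\infty N_X(e^tB)dt$ is the integrated form of $h_X=-(xk_X)'$ under the change of variables $u=e^sx$.

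\emph{For parts (c) and (d)}, the plan is short. Part (c) merely reads $h_X(x)=-(xk_X(x))'\ge 0$ as the assertion that $xk_X(x)$ is non-increasing on each half-line, which is the classical Urbanik characterization of L\'evy densities of selfdecomposable variables. Part (d) is an immediate consequence of (a) and (b): substituting the L--K formula for $\psi_X$ with triple $[b,s^2,N_X]$ from (4) into $\log\psi_X(t)=t(\log\phi_X(t))'$ gives (5) upon taking logarithms.

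\emph{The main obstacle} lies in (b): one must carefully justify the integration by parts by showing that the boundary terms at $0$ and at $\infty$ vanish. The tail vanishing uses that $xk_X(x)$ is non-increasing and $M_X$ has integrable tails (so $xk_X(x)\to 0$), while the behavior at $0$ requires $x^2k_X(x)\to 0$, guaranteed by the L\'evy integrability $\int\min(1,x^2)k_X(x)dx<\infty$. Once these boundary considerations are dispatched, the remaining bookkeeping to reconcile the $\arctan$ correction in the drift is routine.
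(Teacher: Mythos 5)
The paper does not actually prove Proposition 1 in the text; it only cites Jurek (2001b), Jurek and Mason (1993), Jurek (1996) and Steutel--van Harn (2004). Your plan reconstructs essentially the argument of those references: part (a) via the identity $\log\phi_X(t)=\int_0^t u^{-1}\log\psi_X(u)\,du$ coming from the random integral representation (with the log-moment condition ensuring convergence), and part (b) by matching L\'evy--Khintchine triples through integration by parts on each half-line. The structure is sound, and all the quantities you derive ($s^2=2\sigma^2$, the $\arctan$ drift correction giving $b$, and the integrated form $M_X(B)=\int_0^\infty N_X(e^tB)\,dt$) come out correctly; so this is the right route, and it supplies a proof where the paper supplies only citations.

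One technical claim in your treatment of the boundary terms is, however, false as stated, even though the conclusion survives. You assert that $x^2k_X(x)\to0$ as $x\to0$ is guaranteed by $\int\min(1,x^2)k_X(x)\,dx<\infty$. It is not: take $k_X(x)=x^{-3}(\log(1/x))^{-2}$ near the origin; then $xk_X(x)$ is non-increasing, $\int_0^1x^2k_X(x)\,dx<\infty$, yet $x^2k_X(x)\to\infty$. For such densities the two integrals produced by integrating $itxe^{itx}k_X(x)$ and $\frac{itx}{1+x^2}k_X(x)$ by parts \emph{separately} need not converge individually, so your schematic identities can be meaningless term by term. The repair is to integrate by parts the combined kernel $\bigl(itxe^{itx}-\frac{itx}{1+x^2}\bigr)k_X(x)$, which is $O(x^2k_X(x))$ near the origin: the boundary contribution at $x=\epsilon$ is then $\epsilon k_X(\epsilon)\bigl[(e^{it\epsilon}-1)-it\arctan(\epsilon)\bigr]=O\bigl(\epsilon^3k_X(\epsilon)\bigr)$, and
\begin{equation*}
\epsilon^3k_X(\epsilon)\le\frac{8}{3}\int_{\epsilon/2}^{\epsilon}u^2k_X(u)\,du\longrightarrow 0
\end{equation*}
by the monotonicity of $uk_X(u)$ and dominated convergence. (A similar remark applies at infinity when $\int_1^\infty xk_X(x)\,dx=\infty$, where differentiation under the integral sign must also be justified on the combined kernel; your argument for $xk_X(x)\to0$ at infinity is fine.) With that repair the proposal is complete and agrees with the cited proofs.
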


For the proof of (3) cf.  Jurek (2001b), Proposition 3.  For proof of  (4) cf. Jurek and Mason (1993), p.120, formulae (3.6.9)-(3.6.11) for $Q=I$, or  Jurek (1996), p.175. Finally for the equivalences  in the square bracket $[...]$, cf. Jurek and Mason (1993), Theorem 3.4.4,  p. 94 or Steutel and Van Haarn (2004), Theorem 6.12, p. 277.

\medskip
The  above equality (5) is  the key identity for all the definite integrals in this note.

\begin{rem}
\emph{(i) \ If selfdecomposable $X$ has finite second moment then in (5) we will use  the kernel  $\exp(itx)-1-itx $; cf.  Bilingsley (1986), Theorem 28.1, p. 384. Moreover, in that case, in (5), $b=a$.}

\emph{(ii)\ From (4) we see that $X\in L_0$ has Gaussian part if and only if its BDRV $Y_X(1)$ has a Gaussian part.}

\emph{(iii) For the real characteristic functions, as in Section 1 below, we get simpler formulas as we can discard the imaginary part.}
\end{rem}

\medskip
\textbf{1. The hyperbolic characteristic function.}

For the information about the  hyperbolic-type  characteristic function we refer to  Pitaman and Yor (2003) or Jurek and Yor (2004).
Also this section may be viewed as a complement to the recent preprint Jurek (2022 a).

\medskip
\emph{a). \underline{Hyperbolic-sine function}.}

\begin{lem}
From the selfdecomposability of the hyperbolic-sine characteristic function $\phi_{\hat{S}}(t)
=\frac{t}{\sinh(t)}$ we get the following  formulas (L\'evy exponents):
\begin{multline*}
(i) \int_0^\infty(\cos(tx)-1)(\pi/2)csch^2(\pi x/2)dx = 1- t \coth(t); \\
(ii) \int_0^\infty (\cos(tx)-1)\frac{\pi}{2}csch^{2}(\frac{\pi x}{2})\,(\pi x\coth(\frac{\pi x}{2})-1)dx = t^2csch^2(t)-t\coth(t); \\
(iii) \int_{0}^\infty(\cos(tx)-1)\frac{\pi}{4} csch^2(\pi x/2)\big[2\pi^2 x^2\coth^{2}(\pi x/2) +  \pi^2 x^2csch^2(\pi x/2)\\ -6\pi x\coth(\pi x/2)+2\big
]dx = 3t^2csch^2(t)-t\coth(t)(2t^2csch^2(t)+1).
\end{multline*}
\end{lem}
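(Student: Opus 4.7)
The identity (5) of Proposition~1 reduces each of the three formulas to a single mechanical step: differentiate the logarithm of the relevant selfdecomposable characteristic function on the right, and identify the corresponding BDRV Lévy density $h$ on the left. Since $\phi_{\hat S}(t)=t/\sinh t$ is real and even, all the Lévy measures that appear are symmetric, there is no Gaussian part (Remark (ii)), and (5) collapses to $\int_{\Rset}(\cos(tx)-1)h(x)\,dx = t(\log\phi)^\prime(t)$; folding by evenness converts this to $2\int_0^\infty(\cos(tx)-1)h(x)\,dx$, which accounts for the coefficients on the left-hand sides of (i)--(iii). The three identities correspond to applying (5) to successive members of the BDRV chain $\hat S,\ Y_{\hat S}(1),\ Y_{Y_{\hat S}(1)}(1)$.

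For (i), I would start from the Weierstrass product
$$\frac{t}{\sinh t}=\prod_{n=1}^\infty\Bigl(1+\frac{t^2}{n^2\pi^2}\Bigr)^{-1},$$
which realises $\hat S$ as a convergent independent sum of Laplace variables with parameters $n\pi$. Since Laplace$(\lambda)$ has Lévy density $e^{-\lambda|x|}/|x|$, summing the resulting geometric series gives $k_{\hat S}(x)=1/\bl|x|(e^{\pi|x|}-1)\br$. Using $(e^{\pi x}-1)^2=4e^{\pi x}\sinh^2(\pi x/2)$, formula (4) then yields, for $x>0$,
$$h_{\hat S}(x)=-(xk_{\hat S}(x))^\prime=\frac{\pi e^{\pi x}}{(e^{\pi x}-1)^2}=\frac{\pi}{4}\,csch^2(\pi x/2).$$
Pairing this with $t(\log(t/\sinh t))^\prime=1-t\coth t$ delivers (i).

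For (ii), I iterate. Monotonicity of $xh_{\hat S}(x)=(\pi x/4)\,csch^2(\pi x/2)$ on $(0,\infty)$, which follows from the elementary inequality $y\coth y\ge 1$ for $y>0$, verifies via (4c) that $Y_{\hat S}(1)$ is itself selfdecomposable. By (3), $\log\psi_{\hat S}(t)=1-t\coth t$, and a direct differentiation gives $t(\log\psi_{\hat S})^\prime(t)=t^2csch^2 t-t\coth t$. The new BDRV density is
$$h_{\hat S}^{(1)}(x)=-(xh_{\hat S}(x))^\prime=\frac{\pi}{4}\,csch^2(\pi x/2)\bl\pi x\coth(\pi x/2)-1\br,\qquad x>0,$$
and substituting into (5) applied to $Y_{\hat S}(1)$ yields (ii). Formula (iii) then follows by the same recipe one step further: differentiate $\log\psi_{Y_{\hat S}(1)}(t)=t^2csch^2 t-t\coth t$ and multiply by $t$ to obtain $3t^2csch^2 t-t\coth t(2t^2csch^2 t+1)$, and compute $h_{\hat S}^{(2)}(x)=-(xh_{\hat S}^{(1)}(x))^\prime$ via the product rule using $(\coth u)^\prime=-csch^2 u$.

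\textbf{Main obstacle.} The serious work sits in (iii): $xh_{\hat S}^{(1)}(x)$ is a triple product of functions of $x$, so its derivative produces several terms that must be collected into the compact bracket $2\pi^2x^2\coth^2(\pi x/2)+\pi^2 x^2csch^2(\pi x/2)-6\pi x\coth(\pi x/2)+2$ visible in the statement; keeping track of the factor $\pi/2$ that arises when changing variable $u=\pi x/2$ in the differentiation is the only real place where arithmetic can slip. Verifying the selfdecomposability of the second iterate (needed to legitimise the last application of (5)) is a monotonicity check of the same flavour as at the first level and is ultimately underwritten by the fact that each Laplace factor in the product representation of $\hat S$ lies deep in the Urbanik chain.
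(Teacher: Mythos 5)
Your proof follows essentially the same route as the paper: identify $k_{\hat{S}}(x)=1/\bigl(|x|(e^{\pi|x|}-1)\bigr)$, iterate the map $h\mapsto -(xh)^\prime$ along the BDRV chain, and pair each iterate with $t(\log\psi)^\prime$ via identity (5); all the computations you outline (the folding by symmetry, the three right-hand sides, the two intermediate densities) agree with the paper's. The only substantive addition is your derivation of $k_{\hat{S}}$ from the Weierstrass product, which the paper simply cites from Pitman--Yor / Jurek--Yor.

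One caveat: your closing claim that the positivity needed for step (iii) is ``ultimately underwritten by the fact that each Laplace factor lies deep in the Urbanik chain'' is false. A Laplace$(\lambda)$ variable has $xh(x)=\lambda x e^{-\lambda x}$, which is not monotone on $(0,\infty)$, so Laplace laws lie in $L_0\setminus L_1$; the product representation therefore only gives $\hat{S}\in L_0$ by closure under convolution, not $L_1$ or $L_2$. The fact that the bracket in (iii) is non-negative (equivalently $\hat{S}\in L_2$) must be verified directly, as you also propose, or quoted from Jurek (2022a), Theorem 1(a), which is what the paper does in NOTE 1.
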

\begin{proof}
Since
\begin{equation*}
\phi_{\hat{S}}(t) = \frac{t}{\sinh t} =[0,0,k_{\hat{S}}(x)]\in ID, \  \  \  \
k_{\hat{S}}(x)=\frac{1}{|x|}\,\frac{1}{e^{\pi |x|}-1}, \  x\in\Rset\setminus{\{(0)\}},
\end{equation*}
i.e., in this example in $a=0$ and $\sigma^2=0$ for the random variable $X:=\hat{S}$.
Since
$h_{\hat{S}}(x):= (-xk_{\hat{S}}(x))^\prime=(\pi/4)csch^2(\pi x/2), \  x\in\Rset\setminus{\{(0)\}},$
is positive thus by part (c) of Proposition 1 we get that $\hat{S}\in L_0$. Its BDRV  $Y_{\hat{S}}(1)=[0,0,h_{\hat{S}}(x)]$, as in view of symmetry of $h_{\hat{S}}(x)$ we have $b=0$ and $s^2=0$ in (4) above.
\newline On the other hand, as
$\psi_{\hat{S}}(t)= \exp(t (\log\phi_{\hat{S}}(t))^\prime) = \exp (1- t\coth(t)$,
 from (6) we get the part (i) of Lemma 1.
\newline Since $$g_{\hat{S}}(x):= - (x h_{\hat{S}}(x))^\prime= \frac{\pi}{4}csch^{2}(\pi x/2)\,(\pi x\coth(\pi x/2)-1) $$
is positive and also symmetric we infer by Proposition 1 that $\hat{S}\in L_1$ (Urbanik class). Moreover, $g_{\hat{S}}(x)$
is the density of the L\'evy measure of the BDRV $Y_{\hat{S}}(1)=[0,0, g_{\hat{S}}]$.

On the other hand ,
$$ t ( \log \psi_{\hat{S}}(t) )^\prime= t(1-t\coth(t))^\prime= t^2csch^2(t)-t\coth(t)  $$
which together with (5) gives the part (ii) of Lemma 1.

Next, using Wolframalpha, we have that
\begin{multline}
r_{\hat{S}}(x):= - (x g_{\hat{S}}(x))^\prime  = \frac{\pi}{8} csch^2(\pi x/2)\\
[2\pi^2 x^2\coth^{2}(\pi x/2) +\pi^2  x^2csch^2(\pi x/2) -6\pi x\coth(\pi x/2)+2]\ge 0,
\end{multline}
is non-negative function therefore, by the part (c) of Proposition 1, we get that $\hat{S}\in L_2$, (Urbanik class).

Finally, since
\begin{equation*}
\exp(t (t^2csch^2(t)-t\coth(t))^\prime)= \exp(3t^2csch^2(t)-t\coth(t)(2t^2csch^2(t)+1))
\end{equation*}
from (5) we get the part (iii) in Lemma 1.
\end{proof}

\medskip
\medskip
\textbf{NOTE 1:} In Jurek (2022a), Theorem 1(a) is proved that $\hat{S}\in L_2\setminus L_3.$ So $\hat{S}\in L_2$
and that fact was used in the proof of Lemma 1.

\medskip
\medskip
\emph{b). \underline{Hyperbolic-cosine function}.}

Similarly as in the section a) for the hyperbolic-sine functions, we have 

\begin{lem}
From the selfdecomposability of the hyperbolic-cosine characteristic function $\phi_{\hat{C}}=\frac{1}{\cosh(t)}$ we get the following L\'evy exponents:
\begin{multline*}
(i) \int_0^\infty(\cos(tx)-1) \frac{\pi}{2}\frac{\cosh(\pi x/2)}{\sinh^2(\pi x/2)}dx= - t\tanh(t);\\
(ii)\int_0^\infty(\cos(tx)-1)\frac{\pi}{4}csch(\frac{\pi x}{2})[\pi x\coth^2(\frac{\pi x}{2})-2\coth(\frac{\pi x}{2})+\pi x csch^2(\frac{\pi x}{2})]dx \\    = -t\tanh(t)-t^2sech^2(t);\\
(iii)\int_0^\infty (\cos(tx)-1)\frac{\pi}{8}csch(\pi x/2)[(\pi x)^2\coth^{3}(\pi x/2) \\ +\coth(\pi x/2)(5(\pi x)^2csch^{2}(\pi x/2)+4)  -
6\pi x\coth^2(\pi x/2)-6\pi x csch^2(\pi x/2)]dx \\ = -t \tanh(t)-t^2(3-2t \tanh t) sech^2(t).
\end{multline*}
\end{lem}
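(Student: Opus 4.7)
The plan is to mirror the proof of Lemma 1, replacing $\phi_{\hat{S}}(t)=t/\sinh t$ by $\phi_{\hat{C}}(t)=1/\cosh t$, and iterating the operator $k\mapsto h:=-(xk)^\prime$ three times to obtain parts (i), (ii) and (iii). The central tool is again the key identity (5) of Proposition 1.

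First I would identify the L\'evy-Khintchine triple of $\hat{C}$: as in the classical hyperbolic-secant representation, $\phi_{\hat{C}}(t)=[0,0,k_{\hat{C}}]\in ID$ with
$$k_{\hat{C}}(x)=\frac{1}{2|x|\sinh(\pi|x|/2)},\quad x\in\Rset\setminus\{0\}.$$
Since $\phi_{\hat{C}}$ is real and even we have $a=0$, $\sigma^2=0$, $b=0$, $s^2=0$, and by Remark 1(iii) the L\'evy-Khintchine integral over $\Rset$ collapses to twice the one over $(0,\infty)$ with kernel $\cos(tx)-1$. A direct differentiation gives $h_{\hat{C}}(x):=-(xk_{\hat{C}}(x))^\prime=\frac{\pi}{4}\cosh(\pi x/2)/\sinh^2(\pi x/2)>0$ for $x>0$, so Proposition 1(c) yields $\hat{C}\in L_0$ with BDRV $Y_{\hat{C}}(1)=[0,0,h_{\hat{C}}]$. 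Since $t(\log\phi_{\hat{C}}(t))^\prime=-t\tanh t$, substituting in (5) gives part (i).

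For parts (ii) and (iii) I repeat the same recipe. Setting $g_{\hat{C}}(x):=-(xh_{\hat{C}}(x))^\prime$ and applying the product rule to $xh_{\hat{C}}(x)=\frac{\pi x}{4}\coth(\pi x/2)\,csch(\pi x/2)$ produces exactly the bracketed expression in (ii); checking nonnegativity (near $0$ by Taylor expansion, while for large $x$ the $\pi x\coth^2$ term dominates the $-2\coth$ term) upgrades the conclusion to $\hat{C}\in L_1$ with BDRV $[0,0,g_{\hat{C}}]$. Computing $t(\log\psi_{\hat{C}}(t))^\prime=t(-t\tanh t)^\prime=-t\tanh t-t^2\,sech^2 t$ and substituting into (5) gives part (ii). Iterating once more, $r_{\hat{C}}(x):=-(xg_{\hat{C}}(x))^\prime$, computed symbolically (as in formula (6) of Lemma 1), equals the bracket in (iii); nonnegativity of $r_{\hat{C}}$ then gives $\hat{C}\in L_2$, and equating with $t(-t\tanh t-t^2\,sech^2 t)^\prime=-t\tanh t-t^2(3-2t\tanh t)\,sech^2 t$ completes part (iii).

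The main obstacle I expect is the verification that $r_{\hat{C}}$ is nonnegative on the whole positive half-line, since its bracket mixes the positive contributions $\pi^2 x^2\coth^3(\pi x/2)$ and $\coth(\pi x/2)(5\pi^2 x^2\,csch^2(\pi x/2)+4)$ with the negative contributions $-6\pi x\coth^2(\pi x/2)-6\pi x\,csch^2(\pi x/2)$. A clean way is to substitute $u=\pi x/2$ and reduce the sign question to an elementary inequality between hyperbolic functions; alternatively, if the analog of NOTE~1 (that $\hat{S}\in L_2\setminus L_3$ from Jurek (2022a)) has already been established for $\hat{C}$, the nonnegativity of $r_{\hat{C}}$ follows automatically from Proposition 1(c) with no hand inequality required.
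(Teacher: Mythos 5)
Your proposal is correct and follows essentially the same route as the paper: identify $k_{\hat{C}}$, iterate the map $k\mapsto(-xk)^\prime$ to get $h_{\hat{C}}$, $g_{\hat{C}}$ and $r_{\hat{C}}$, and at each stage equate the L\'evy--Khintchine form of the BDRV with $\exp(t(\log\phi)^\prime)$ via identity (5). Even your fallback for the nonnegativity of $r_{\hat{C}}$ matches the paper, which invokes $\hat{C}\in L_2\setminus L_3$ from Jurek (2022a) (its NOTE 2) rather than proving the inequality by hand.
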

\begin{proof}
For the hyperbolic-cosine variable $\hat{C}$ we have its characteristic function
\begin{equation*}
\phi_{\hat{C}}(t) = \frac{1}{\cosh t} \in L_0;  \
k_{\hat{C}}(x) = \frac{e^{-\pi|x|/2}}{|x|(1-e^{-\pi|x|})} = \frac{1}{2|x|\sinh(\pi|x|/2)};
\end{equation*}
that is, $\hat{C}=[0,0,k_{\hat{C}}]\in L_0$, (by the condition (c) in Proposition 1), with L\'evy spectral measure $M_{\hat{C}}(dx):=k_{\hat{C}}(x)dx$; cf. Jurek (1996), Pitman-Yor (2003) or  Jurek - Yor (2004).

Hence for $h_{\hat{C}}(x):= (-xk_{\hat{C}}(x))^\prime = \frac{\pi}{4}\frac{\cosh(\pi x/2)}{\sinh^2(\pi x/2)}$ is non-negative and symmetric density of L\'evy measure of $Y_{\hat{C}}(1)$ in  (2). Thus in (4) we get $b=0$, that is, $Y_{\hat{C}}(1)=[0,0,h_{\hat{C}}(x)]$.
\newline On the other hand, by (3), and (5)
\begin{multline*}
\psi_{\hat{C}}(t):= \exp(t (\log(\phi_{\hat{C}}(t))^\prime)= \exp(-t \tanh(t))=  [0,0,h_{\hat{C}}(x)]\\ = 
\exp\int_0^\infty (\cos(tx)-1)(  \frac{\pi}{2}\frac{\cosh(\pi x/2)}{\sinh^2(\pi x/2)} )dx,
\end{multline*}
which gives (i) of Lemma 2.

As $Y_{\hat{C}}(1)\in L_0$ (is selfdecomposable) we can repeat the  procedure from the previous step. Since, by WolframAlpha,
\begin{multline*}
g_{\hat{C}}(x):=  (-xh_{\hat{C}}(x))^\prime = \frac{\pi}{8}csch(\frac{\pi x}{2})[\pi x\coth^2(\frac{\pi x}{2}) \\ -2\coth(\frac{\pi x}{2})+\pi x csch^2(\frac{\pi x}{2})],
\end{multline*}
is  a positive and symmetric density of a L\'evy measure of BDRV of $Y_{\hat{C}}(1)$ in (2).
\newline On the other hand, by(3),
$(t (-t \tanh(t))^\prime))= - t \tanh(t)-t^2 sech^2(t))$, which together with the formula for $g_{\hat{C}}(x)$
 proves the equality (ii) in Lemma 2.

Again, as above,  by WolframAlpha,
\begin{multline*}
(-xg_{\hat{C}}(x))^\prime= \frac{\pi}{8}csch(\pi x/2)[(\pi x)^2\coth^{3}(\pi x/2) +\coth(\pi x/2)\\(5(\pi x)^2csch^{2}(\pi x/2)+4)
-6\pi x\coth^2(\pi x/2)-6\pi x csch^2(\pi x/2)],
\end{multline*}
is positive and symmetric density of L\'evy measure we have that $[0,0,g_{\hat{C}}(x)]\in L_0$, by Proposition 1 c). On the other one, by (3),
\begin{multline*}
-t(t\tanh(t)+t^2 sech^2(t))^\prime = -t \tanh(t)-t^2(3-2t \tanh t) sech^2(t),
\end{multline*}
which, in view of (5), gives the identity (iii) in Lemma 2.
\end{proof}

\medskip
\medskip
\textbf{NOTE 2:} In Jurek (2022a), Theorem 1(a) is proved that $\hat{C}\in L_2\setminus L_3.$ So $\hat{C}\in L_2$
and that fact was  used in the proof of Lemma 1.

\medskip
\emph{c). \underline{Hyperbolic-tangent function}.}

For the  hyperbolic tangent variable $\hat{T}=[0,0,k_T(x)$  where the characteristic function and L\'evy measure density are:
$$
\phi_{\hat{T}}(t)=\tanh(t)/ \ \ \mbox{and} \ \ \  k_{\hat{T}}(x)=\frac{1}{2|x|}\frac{e^{-\pi |x|/4}}{\cosh(\pi |x|/4)}.
$$
respectively. By (e) in Proposition 1 we get that $\hat{T}\in L_0$. Then its BDRV $Y_T(1)=[0,0,h_T(x)]$, where
\begin{multline*}
h_{\hat{T}}(t)=(-xk_{\hat{T}})^\prime=\frac{\pi}{8}\frac{1}{\cosh^2(\pi x/4)};  \\  \psi_{\hat{T}}(t)=\exp(t(\log\phi_{\hat{T}}(t))^\prime) =\exp\big[\frac{2t}{\sinh(2t)}-1\big],
\end{multline*}
cf.  fromula (2) in Proposition 1 (a).

Hence we get
\begin{cor}
(i) From the selfdecomposability of hyperbolic tangent $\hat{T}$ we get equality
$$\int_{\Rset\setminus{\{(0)\}}}(cos(tx)-1)\frac{\pi}{8}\frac{1}{\cosh^2(\pi x/4)}dx= \frac{2t}{\sinh(2t)}-1; \ t \in \Rset.$$

(ii) The characteristic function $\psi_T(t)$ represents a compound Poisson distribution therefore $T\notin L_1$ Urbanik class.
\end{cor}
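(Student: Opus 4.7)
For part (i) my plan is to invoke the key identity (5) of Proposition 1 directly. The preamble to the corollary already supplies both required ingredients: the L\'evy density $h_{\hat{T}}(x) = \tfrac{\pi}{8}\operatorname{sech}^2(\pi x/4)$ of the BDRV $Y_{\hat{T}}(1)$, and the derivative $t(\log \phi_{\hat{T}}(t))' = \tfrac{2t}{\sinh(2t)}-1$. Since $h_{\hat{T}}$ is even, the identification (4) forces $b = 0$ and $s^2 = 0$, and the odd correction $\tfrac{itx}{1+x^2}$ in the L\'evy--Khintchine integrand of (5) integrates to zero against $h_{\hat{T}}$. Consequently the left-hand side of (5) collapses to $\int_{\Rset\setminus\{(0)\}}(\cos(tx)-1)\,h_{\hat{T}}(x)\,dx$, and equating with the right-hand side yields (i).

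For part (ii), the crucial observation is that the exponent appearing in $\psi_{\hat{T}}$ is, up to an additive constant, a hyperbolic-sine characteristic function at a rescaled argument. Since Lemma 1 records $\phi_{\hat{S}}(s) = s/\sinh(s)$, we can rewrite
$$\psi_{\hat{T}}(t) \;=\; \exp\!\bl \phi_{\hat{S}}(2t) - 1 \br.$$
This displays $\psi_{\hat{T}}$ in the canonical compound-Poisson form $\exp\bl\lambda(\varphi(t)-1)\br$ with intensity $\lambda = 1$ and jump distribution equal to the law of $2\hat{S}$. In particular, the L\'evy measure $N_{\hat{T}}$ of the BDRV $Y_{\hat{T}}(1)$ is a \emph{nonzero finite} measure on $\Rset\setminus\{(0)\}$.

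To derive $\hat{T} \notin L_1$ from this, I would invoke the structural fact recorded in (4) of Proposition 1: every nontrivial selfdecomposable L\'evy measure necessarily satisfies $M(\Rset\setminus\{(0)\}) = \infty$, a direct consequence of the monotonicity of $|x|k(x)$ combined with the $1/|x|$ singularity it forces at the origin. Since $Y_{\hat{T}}(1)$ carries a nonzero L\'evy measure of total mass $1$, it cannot lie in $L_0$; by the inductive definition (1a) of the Urbanik classes this is exactly the assertion $\hat{T} \notin L_1$. No serious obstacle is anticipated: (i) is a mechanical substitution into (5), and the only nontrivial step in (ii) is spotting that the exponent $\tfrac{2t}{\sinh(2t)}$ of $\psi_{\hat{T}}$ is itself the Lemma 1 characteristic function $\phi_{\hat{S}}(2t)$, after which the compound-Poisson/infinite-L\'evy-mass dichotomy closes the argument.
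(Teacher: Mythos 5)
Your proposal is correct and follows essentially the same route as the paper: part (i) is the direct substitution of $h_{\hat{T}}$ and $t(\log\phi_{\hat{T}}(t))^\prime$ into identity (5), with $b=0$, $s^2=0$ and the odd kernel term vanishing by the symmetry of $h_{\hat{T}}$, while part (ii) rests on the finiteness of the L\'evy measure of $Y_{\hat{T}}(1)$ contradicting the infinite-mass requirement in (4) for class $L_0$. Your identification $\psi_{\hat{T}}(t)=\exp\bl\phi_{\hat{S}}(2t)-1\br$ is a pleasant way of exhibiting the compound Poisson structure explicitly, but it is not a genuinely different argument --- the paper reaches the same conclusion simply from the fact that $\frac{\pi}{8}\operatorname{sech}^2(\pi x/4)\,dx$ has finite total mass.
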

For the part (i), also cf. Jurek-Yor (2004), Proposition 1 with Corollary 1 and the  equality (10). Since L\'evy measures of class $L_0$
are infinite by (4), we get the part (ii) of corollary.

\medskip
\begin{rem}
\emph{}
\emph{
Formulas Lemma 1(i), Lemma 2(i)  and the above  had already appeared in Jurek-Yor (2004). They are added here for the completeness of this presentation.}
\end{rem}

\medskip
\medskip
\textbf{2. Characteristic functions expressed via the Euler's gamma function.}

\emph{(a). \underline{The log-gamma distribution.}}

Log-gamma variables are just the logarithms of the gamma $\gamma_{\alpha,\lambda}$ variables with the parameters $\lambda>0$ (scale) and  $\alpha>0$ (shape).
They are selfdecomposable   
with characteristic functions
\begin{multline}
\phi_{\log \gamma_{\alpha,\lambda}}(t)= e^{-it \log(\lambda)}\, \frac{\Gamma(\alpha+it)}{\Gamma(\alpha)}\\=
\exp[it(\Psi^{(0)}(\alpha)-\log \lambda)+\int_{-\infty}^{0}(e^{itx}-1-itx)\frac{e^{\alpha x}}{|x|(1-e^x)}dx], \\
k_{\log\gamma_{\alpha,1}}(x):=\frac{e^{\alpha x}}{|x|(1-e^x)}1_{(-\infty, 0)}(x),
\end{multline}
where  $ \Psi^{(0)}(z):=d\log \Gamma(z)/dz $ denotes \emph{the digamma function};  cf. Jurek (1997), p. 98 or Jurek (2022), p. 110 ( a comment below the formula (16)). And because $\log\gamma_{\alpha,\lambda}$ have finite second moment ( cf. Corollary 2, in Jurek (2022)) the kernel under the integrand is from  Kolmogorov's formula; cf. Remark 1 (i).

For the random variable $Y_{\log\gamma_{\alpha,1}}(1)$, in the random integral representation (2), we have
$$
h_{\log\gamma{\alpha,1}}(x)= (-x k_{\log\gamma_{\alpha,1}}(x))^\prime=\frac{e^{\alpha x}(\alpha (1-e^{x}) + e^{x})}{(1-e^x)^2}1_{(-\infty,0)}(x),
$$
$b=\Psi^{(0)}(\alpha)-\log \lambda$  and $s^2=0$. Hence
\begin{multline*}
\psi_{\log \gamma_{\alpha,\lambda}}(t)=\exp[it(\Psi^{(0)}(\alpha)-\log \lambda)+\int_{-\infty}^{0}(e^{itx}-1-itx)\frac{e^{\alpha x}(\alpha (1-e^{x})+e^{x})}{(1-e^x)^2}]dx.
\end{multline*}

On the other hand, from (3),  we get
$$
\psi_{\log\gamma_{\alpha,\lambda}}(t)=\exp (t (\log\phi_{\log\gamma_{\alpha,\lambda}}(t))^\prime= \exp(it (\Psi^{(0)}(\alpha +it)-\log\lambda).
$$
All in all, from the identity (5) (taking the Kolmogorov's kernel)  we infer the identity

\begin{cor}
From the selfdecomposability property of the log-gamma variables, for $\alpha >0, \beta>0 $ and $t\in\Rset$ we have

(1) $\int_{-\infty}^{0}(e^{itx}-1-itx)e^{\alpha x}\,\frac{\alpha(1-e^x)+e^{x}}{(1-e^x)^2}]dx = it(\Psi^{(0)}(\alpha +it)-\Psi^{(0)}(\alpha)).$

(2) $\int_{0}^\infty (e^{-itx}-1 +itx)e^{-\beta x}\,\frac{\beta(1-e^{-x}) +e^{-x}}{(1-e^{-x})^2}]dx= it(\Psi^{(0)}(\beta -it)-\Psi^{(0)}(\beta)).$
\end{cor}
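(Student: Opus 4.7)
The plan is to apply identity $(5)$ of Proposition 1 directly to the selfdecomposable variables $X_{1}:=\log\gamma_{\alpha,\lambda}$ for part $(1)$ and $X_{2}:=-\log\gamma_{\beta,1}$ for part $(2)$. Both log-gamma variables have finite second moments, so by Remark 1(i) we may employ Kolmogorov's kernel $e^{itx}-1-itx$ and identify the drift of the BDRV in $(4)$ with its expectation, bypassing the $\arctan$ correction.

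For part $(1)$, every ingredient on the left-hand side of $(5)$ is already recorded in the paragraph preceding the statement: the L\'evy density $k_{\log\gamma_{\alpha,1}}$, the BDRV density $h_{\log\gamma_{\alpha,1}}$, the drift $b=\Psi^{(0)}(\alpha)-\log\lambda$, and $s^{2}=0$. The only fresh computation is the right-hand side of $(5)$: differentiating $\log\phi_{X_{1}}(t)=-it\log\lambda+\log\Gamma(\alpha+it)-\log\Gamma(\alpha)$ gives $(\log\phi_{X_{1}})'(t)=-i\log\lambda+i\Psi^{(0)}(\alpha+it)$, whence $t(\log\phi_{X_{1}})'(t)=it\bigl(\Psi^{(0)}(\alpha+it)-\log\lambda\bigr)$. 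Substituting into $(5)$ makes the common $-it\log\lambda$ terms cancel (so the scale parameter $\lambda$ drops out of the final identity), and identity $(1)$ emerges.

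Part $(2)$ is obtained by running the same argument for $X_{2}=-\log\gamma_{\beta,1}$, which is again selfdecomposable with characteristic function $\Gamma(\beta-it)/\Gamma(\beta)$ and L\'evy density $k_{X_{2}}(x)=k_{\log\gamma_{\beta,1}}(-x)1_{(0,\infty)}(x)$. Computing $(-xk_{X_{2}}(x))'$ reproduces the integrand of $(2)$ on the positive half-line, and the reflection symmetry $h_{X_{2}}(x)=h_{X_{1}}(-x)$ combined with $(4)$ yields the BDRV drift $-\Psi^{(0)}(\beta)$; equivalently, one can deduce $(2)$ directly from $(1)$ by the change of variable $x\mapsto -x$ together with complex conjugation. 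The main obstacle is bookkeeping rather than analysis---keeping the signs consistent between the Kolmogorov kernel $e^{itx}-1-itx$ and its conjugate, and the corresponding sign of $it$ inside the digamma argument---since the analytic content is entirely packaged in Proposition 1 and in the elementary identity $\frac{d}{dt}\log\Gamma(\alpha+it)=i\Psi^{(0)}(\alpha+it)$.
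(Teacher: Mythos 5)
Your treatment of part (1) is exactly the paper's own argument: the paper likewise reads off $k_{\log\gamma_{\alpha,1}}$, $h_{\log\gamma_{\alpha,1}}$, $b=\Psi^{(0)}(\alpha)-\log\lambda$ and $s^{2}=0$ from the preceding display, computes $t(\log\phi_{\log\gamma_{\alpha,\lambda}}(t))^{\prime}=it(\Psi^{(0)}(\alpha+it)-\log\lambda)$, and cancels the $\log\lambda$ terms in (5) taken with the Kolmogorov kernel. That part is correct and needs no further comment.

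The gap is in part (2), precisely at the ``bookkeeping'' you defer. If you actually carry out either of your two routes, you do not land on the identity as printed. The substitution $x\mapsto -x$ in (1) (with $\alpha$ replaced by $\beta$) turns the kernel $e^{itx}-1-itx$ into $e^{-itx}-1+itx$ and reproduces the integrand of (2), but it leaves the right-hand side untouched, giving $it(\Psi^{(0)}(\beta+it)-\Psi^{(0)}(\beta))$; adding complex conjugation flips the kernel back to $e^{itx}-1-itx$ while producing $-it(\Psi^{(0)}(\beta-it)-\Psi^{(0)}(\beta))$. The direct computation with $X_{2}=-\log\gamma_{\beta,1}$ agrees: $t(\log\phi_{X_{2}}(t))^{\prime}=-it\,\Psi^{(0)}(\beta-it)$ and $a=-\Psi^{(0)}(\beta)$, so (5) yields $\int_{0}^{\infty}(e^{itx}-1-itx)\,h_{X_{2}}(x)\,dx=-it(\Psi^{(0)}(\beta-it)-\Psi^{(0)}(\beta))$. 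In no combination does the kernel $e^{-itx}-1+itx$ pair with the right-hand side $it(\Psi^{(0)}(\beta-it)-\Psi^{(0)}(\beta))$; indeed, expanding at $t=0$ the left side of the printed (2) is $-\tfrac{t^{2}}{2}\int_{0}^{\infty}x^{2}h(x)\,dx+O(t^{3})<0$ while the printed right side is $t^{2}\Psi^{(1)}(\beta)+O(t^{3})>0$. So (2) as stated carries a sign error (the paper itself derives only (1) and gives no computation for (2)); your write-up should either record the corrected right-hand side $it(\Psi^{(0)}(\beta+it)-\Psi^{(0)}(\beta))$ for that kernel, or flag the discrepancy, rather than asserting that the reflection argument ``yields (2)''.
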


\medskip
\medskip
\emph{(b). \underline{Logistic distribution.}}

Let $\emph{l}_\alpha$ denote the logistic distribution; cf. Ushakov (1999), p. 298, Feller (1966), p.52, Jurek (2021), p. 101.  Then (by (7) we have
\begin{multline}
\phi_{\emph{l}_\alpha}(t)=|\frac{\Gamma(\alpha + i t/\pi)}{\Gamma(\alpha)}|^2 = \phi_{1/\pi \log\gamma_{\alpha,1}}(t)\,  \phi_{1/\pi \log\gamma_{\alpha,1}}(-t) \\ =    \exp \int_{-\infty}^\infty(\cos(tx)-1)k_{l_\alpha}(x) dx; \ \ \mbox{with} \  \   k_{l_\alpha}(x)= \frac{1}{|x|}\frac{e^{-\alpha\pi|x|}}{1-e^{- \pi|x|}};
\end{multline}
and by Proposition 1 (c) we get that $\emph{l}_\alpha\in L_0.$

Furthermore, from
\begin{multline}
h_{\emph{l}_\alpha}(x): =(-x  k_{l\alpha}(x))^\prime \\  =
 \frac{\pi}{4}\frac{1}{\sinh^2(\pi|x|/2)} e^{-(\alpha-1)\pi|x|}\{\alpha(1-e^{-\pi|x|})+e^{-\pi|x|}\}>0.
\end{multline}
we conclude that $[0,0,h_{\emph{l}_\alpha}(x)]$ is the background driving variable $Y_{\emph{l}_\alpha}(1)$ in the integral representation (2).

On the other hand from (3) we get
\begin{multline*}
\psi_{\emph{l}_\alpha}(t)=\exp(t (\log(\phi_{\emph{l}_\alpha}(t)))^\prime)
= \exp(t/\pi (i \Psi^{(0)}(\alpha +it/\pi)- i\Psi^{(0)}(\alpha -it/\pi)))\\=
 \exp(t/\pi [i \Psi^{(0)}(\alpha +it/\pi)+\overline{(i\Psi^{(0)}(\alpha +it/\pi))}])
 = \frac{2t}{\pi} \Re[i\Psi^{(0)}(it/\pi +\alpha)].
\end{multline*}
All in all we get
\begin{cor}
From the selfdecomposability property of the logistic  distribution $\emph{l}_{\alpha}, \alpha>0$,  for  $t\in \Rset$,  we have
\begin{equation*}
\int_0^\infty(\cos(tx)-1) \, \frac{\pi}{2}\, \frac{e^{-(\alpha-1)\pi|x|}(\alpha +(1-\alpha)e^{-\pi|x|})}{\sinh^2(\pi|x|/2)}dx
= \frac{2t}{\pi}\,\Re[i\Psi^{(0)}(it/\pi +\alpha)].
\end{equation*}
\end{cor}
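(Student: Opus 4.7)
The plan is to apply the master identity (5) of Proposition~1 to the selfdecomposable logistic variable $\emph{l}_\al$. All the ingredients are already assembled in the preamble to the corollary: the L\'evy density $k_{l_\al}$ is given in (8), the density of the BDRV $Y_{\emph{l}_\al}(1)=[0,0,h_{l_\al}]$ is the expression (9), and the evenness of $h_{l_\al}$ together with (4) forces $b=0$. Hence the left-hand side of (5) reduces to $\int_\Rset(e^{itx}-1-itx/(1+x^2))h_{l_\al}(x)\,dx$.

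First I would fold the integral onto the positive half-axis. Since $h_{l_\al}$ is even, the imaginary pieces $i\sin(tx)\,h_{l_\al}(x)$ and $-itx/(1+x^2)\,h_{l_\al}(x)$ integrate to zero, so the left-hand side equals $2\int_0^\infty(\cos(tx)-1)\,h_{l_\al}(x)\,dx$. Substituting the explicit formula (9), using the rewrite $\al(1-e^{-\pi|x|})+e^{-\pi|x|}=\al+(1-\al)e^{-\pi|x|}$, and absorbing the factor $2\cdot\pi/4=\pi/2$, produces precisely the integrand appearing in the statement of the corollary.

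Second, I would compute the right-hand side $t(\log\phi_{l_\al}(t))'$ on its own. From (8) one has $\log\phi_{l_\al}(t)=\log\Gamma(\al+it/\pi)+\log\Gamma(\al-it/\pi)-2\log\Gamma(\al)$. Differentiating in $t$, together with $\Psi^{(0)}=(\log\Gamma)'$ and the chain rule, yields $(i/\pi)[\Psi^{(0)}(\al+it/\pi)-\Psi^{(0)}(\al-it/\pi)]$. By Schwarz reflection $\Psi^{(0)}(\al-it/\pi)=\overline{\Psi^{(0)}(\al+it/\pi)}$, so $-i\Psi^{(0)}(\al-it/\pi)=\overline{i\Psi^{(0)}(\al+it/\pi)}$, and the bracket collapses to $2\Re[i\Psi^{(0)}(\al+it/\pi)]$. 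Multiplying by $t$ produces $(2t/\pi)\Re[i\Psi^{(0)}(\al+it/\pi)]$, and equating this with the transformed left-hand side through identity (5) completes the proof.

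The only real obstacle is bookkeeping: one must carefully track the $1/\pi$ produced by the chain rule (the gamma arguments being $\al\pm it/\pi$) and check that the Schwarz-reflection manipulation is legitimate on the strip $\Re z=\al>0$, where $\Gamma$ is zero-free and holomorphic. No serious analytic obstruction is expected; the work is essentially a transcription of the same BDRV-plus-logarithmic-derivative mechanism that already drove Corollary~2.
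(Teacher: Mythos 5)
Your proposal is correct and follows essentially the same route as the paper: both apply identity (5) with the even BDRV density $h_{l_\alpha}$ from (9) (so that $b=0$ and only the cosine part survives, giving the factor $2\cdot\pi/4=\pi/2$), and both compute $t(\log\phi_{l_\alpha}(t))'$ via the digamma function and the reflection $\Psi^{(0)}(\alpha-it/\pi)=\overline{\Psi^{(0)}(\alpha+it/\pi)}$ to obtain $\frac{2t}{\pi}\Re[i\Psi^{(0)}(it/\pi+\alpha)]$. No substantive difference.
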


\medskip
\medskip
\emph{c). \underline{The generalized z-distribution.}}

For positive parameters $a, b_1,b_2, d $ and $m \in \Rset$ the \emph{generalized z-distribution} $GZ\equiv GZ(a,b_1,b_2,d,m )$ is given by its characteristic function
$$
\phi_{GZ}(t):=\big(\frac{B(b_1+\frac{i a t}{2\pi}, b_2-\frac{i at}{2\pi})}{B(b_1,b_2)}\big)^{2d}\,e^{imt}=\big( \frac{\Gamma(b_1+\frac{i a t}{2\pi})}{\Gamma(b_1)}\, \frac{\Gamma(b_2-\frac{i a t}{2\pi})}{\Gamma(b_2)}  \big)^{2d}e^{imt},
$$
where $B(z_1,z_2)$ denotes the beta-function; cf. Schoutens (2003) , p. 64, or Ushakov (1999) , p. 309.

(For a particular choices of parameters we get Fisher z-distribution; cf. Jurek (2021), the section 3.14, p.105.)

Let us note that the characteristic function $\phi_{GZ}(t)$  can be expressed via characteristic functions of log-gamma variables. Namely, as we have
\begin{multline}
\phi_{GZ}(t)=
\big(\phi_{\log \gamma_{b_1,1}}(at/(2\pi))\,\phi_{-\log \gamma_{b_2,1}}(at/(2\pi))\big)^{2d} e^{imt} \\
= \big(\phi_{(a/2\pi)\log \gamma_{b_1,1}}(t)\,\phi_{-(a/2\pi)\log \gamma_{b_2,1}}(t)\big)^{2d} e^{imt} \ \ t \in \Rset;
\end{multline}
cf. the section (b), on log-gamma variables, above.


Below we will assume  $ a=2\pi, d=1/2$ and $m=0$ and denote $\tilde{GZ}\equiv GZ(2\pi, b_1,b_2, 1/2,0)$.

\medskip
Hence by (7) and section (a) on the log-gamma variables, the $\tilde{GZ}$ distribution has  L\'evy (spectral) measure $\nu_{\tilde{GZ}}(dx):=k_{\tilde{GZ}}(x)dx$ where the density is of the from
\begin{equation}
k_{\tilde{GZ}}(x)= \frac{e^{b_1 x}}{|x|(1-e^{x})}1_{(-\infty,0)}(x) +  \frac{e^{-b_2 x}}{x(1-e^{-x})}1_{(0, \infty)}(x)].
\end{equation}

Hence,
\begin{multline}
h_{\tilde{GZ}}(x):= (-x k_{\tilde{GZ}}(x))^\prime = [ ( \frac{e^{b_1 x}}{(1-e^{x})})^\prime 1_{(-\infty,0)}(x)  - (\frac{e^{-b_2 x}}{(1-e^{-x})})^\prime
1_{(0, \infty )}(x)] \\
=  e^{b_1 x}\, \frac{b_1(1-e^{x})+e^x}{(1-e^{x})^2}\,1_{(-\infty,0)}(x) \\ + e^{- b_2 x} \frac{b_2(1-e^{-x})+e^{-x}}{(1-e^{-x})^2}\,1_{(0,\infty)}(x)>0;
\end{multline}
is the density of L\'evy measure of the  BDRV $Y_{GZ}(1)$. From Corollary 2, in Jurek (2022), we have that log-gamma variables have finite second moment. Consequently, in Kolmogorov's representation, we have
$$
Y_{GZ}(1)=[\Psi^{(0)}(b_1)+\Psi^{(0)}(b_2),0,h_{\tilde{GZ}}(x)]. 
$$
On the other hand, from (5) we get that
\begin{multline}
\psi_{\tilde{GZ}}(t):=\mathbb{E}[\exp(itY_{\tilde{GZ}}(1)]= \exp[
t (\log\phi_{\tilde{GZ}}(t))^\prime] \\
= \exp[t \big(\log\Gamma(b_1+it)+  \log\Gamma(b_2-it)\big)^\prime ] \\
\exp [it(\Psi^{(0)}(b_1 +it)- \Psi^{(0)}(b_2 -it))) ]
\end{multline}
where $\Psi^{(0)}(z):= d Log\Gamma(z)/dz $ is the digamma function. Hence by (6)  we infer the following:
\begin{cor}
From the selfdecomposability of the generalized z-distribution we have the identity
\begin{multline*}
\int_{\Rset \setminus{\{(0}\}}(e^{itx}-1-itx)(e^{b_1 x}\, \frac{b_1(1-e^{x})+e^x}{(1-e^{x})^2}\,1_{(-\infty,0)}(x) \\ +  e^{- b_2 x}\, \frac{b_2(1-e^{-x})+e^{-x}}{(1-e^{-x})^2}\,1_{(0,\infty)} ) dx \\ = it[(\Psi^{(0)}(b_1 +it)-\Psi^{(0)}(b_1))-  (\Psi^{(0)}(b_2 -it)-\Psi^{(0)}(b_2))],
\end{multline*}
for all  $t\in \Rset.$
\end{cor}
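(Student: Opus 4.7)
The plan is to apply Proposition~1 to the variable $\tilde{GZ}$ whose characteristic function, by (10) with the chosen parameters $a=2\pi$, $d=1/2$, $m=0$, reduces to the simple product $\phi_{\tilde{GZ}}(t)=\Gamma(b_1+it)\Gamma(b_2-it)/[\Gamma(b_1)\Gamma(b_2)]$, i.e.\ the product of one log-gamma characteristic function and one reflected log-gamma characteristic function from subsection (a). First I would confirm that $\tilde{GZ}\in L_0$ via Proposition~1(c): the Lévy density $k_{\tilde{GZ}}$ in (11) satisfies $h_{\tilde{GZ}}(x)=-(xk_{\tilde{GZ}}(x))'\ge 0$, and the explicit formula (12) makes this transparent since each factor on each half-line is manifestly positive.

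Next, from Proposition~1(b) I would read off the Lévy–Khintchine triple of the BDRV $Y_{\tilde{GZ}}(1)$. Because each log-gamma summand has finite second moment (Corollary~2 in Jurek (2022)), so does $\tilde{GZ}$, and Remark~1(i) lets me use the Kolmogorov kernel $e^{itx}-1-itx$; in that form the drift of $Y_{\tilde{GZ}}(1)$ equals $\mathbb{E}[Y_{\tilde{GZ}}(1)]=\mathbb{E}[\tilde{GZ}]=\Psi^{(0)}(b_1)-\Psi^{(0)}(b_2)$, as read off directly from (7). This rewrites $\log\psi_{\tilde{GZ}}(t)$ as the sum of the integral appearing on the left-hand side of the target identity plus a linear-in-$t$ correction.

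In parallel, I would compute $\psi_{\tilde{GZ}}(t)$ analytically via identity (3): differentiating $\log\phi_{\tilde{GZ}}(t)=\log\Gamma(b_1+it)+\log\Gamma(b_2-it)-\log\Gamma(b_1)-\log\Gamma(b_2)$ gives $(\log\phi_{\tilde{GZ}})'(t)=i\Psi^{(0)}(b_1+it)-i\Psi^{(0)}(b_2-it)$, so that $\log\psi_{\tilde{GZ}}(t)=it[\Psi^{(0)}(b_1+it)-\Psi^{(0)}(b_2-it)]$, which is precisely the content of (13). Equating the two expressions for $\log\psi_{\tilde{GZ}}(t)$ via the key identity (5) is what produces the claimed formula.

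The only care point is the bookkeeping of the linear-in-$t$ pieces: the drift contribution $it(\Psi^{(0)}(b_1)-\Psi^{(0)}(b_2))$ from the Kolmogorov form on the integral side must be transposed and combined with $it[\Psi^{(0)}(b_1+it)-\Psi^{(0)}(b_2-it)]$ on the analytic side in order to produce precisely the digamma differences $\Psi^{(0)}(b_1+it)-\Psi^{(0)}(b_1)$ and $\Psi^{(0)}(b_2-it)-\Psi^{(0)}(b_2)$ recorded in the statement. Everything else is a direct transcription of the scheme already applied in the proofs of Corollaries~2 and~3, so no genuinely new difficulty arises.
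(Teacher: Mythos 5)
Your proposal follows essentially the same route as the paper: factor $\phi_{\tilde{GZ}}$ into log-gamma characteristic functions, compute $h_{\tilde{GZ}}=(-xk_{\tilde{GZ}})'$ to get the L\'evy density of the BDRV, use the Kolmogorov kernel thanks to the finite second moment, and equate with $\exp(t(\log\phi_{\tilde{GZ}})')=\exp(it[\Psi^{(0)}(b_1+it)-\Psi^{(0)}(b_2-it)])$ via identity (5). Your drift value $\Psi^{(0)}(b_1)-\Psi^{(0)}(b_2)$ is in fact the correct one (the paper's displayed $\Psi^{(0)}(b_1)+\Psi^{(0)}(b_2)$ is a sign slip that is not propagated into the final identity), so your bookkeeping of the linear-in-$t$ terms is right.
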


\medskip
\textbf{3. Meixner and Feller-Spitzer ( or Bessel) distributions.}

\emph{(a). \underline{The Meixner $\mathbb{M}$ distribution.}}

 For the parameters
$ a>0, \ -\pi<b<\pi, \ d>0, \ m \in \Rset,$ \emph{the probability density function} $f(x;a,b,m,d)$, $ x\in\Rset$,
 \begin{multline}
f(x;a,b,m,d):= \frac{(2\cos(b/2))^{2d}}{2a\pi\Gamma(2d)}\exp(\frac{b(x-m)}{a}) |\Gamma(d+\frac{i(x-m)}{a})|^2, 
\end{multline}
is called Meixner distribution. It has all moments finite and  the characteristic functions is
\begin{equation}
\phi_{\mathbb{M}}(t)\equiv\phi_{\mathbb{M}(a,b,d,m)}(t)=(\frac{\cos(b/2)}{\cosh(\frac{at-ib}{2})})^{2d}\exp(imt);
\end{equation}
cf. Schoutens (2003), p. 62-63.
From above we infer that $\mathbb{M}$ are infinitely divisible. Furthermore, for our purposes, without the loss of generality we assume that
$m=0$ and $d=1/2$.

Being infinitely divisible, with finite second moment,  Meixner distributions admit the following  representations
\begin{multline}
\phi_{\mathbb{M}}(t)= \exp(it\gamma +\int_{-\infty}^{\infty}(e^{itx}-1- itx\,1_{(|x|\le 1)}(x)k_M(dx))), \ \mbox{where} \\
\gamma:=\frac{a}{2}\tan(b/2)-\int_{1}^\infty \frac{\sinh(bx/a)}{\sinh(\pi x/a)}dx,
k_{\mathbb{M}}(dx):=\frac{e^{bx/a}}{2\,x\sinh(\pi x/a)}dx;
\end{multline}
cf. Schoutens (2003), p. 153  for the shift parameter $\gamma$ and p. 154 for the density $k_{\mathbb{M}}(x)$ of L\'evy measure.

This can be extended to Kolmogorov's  type kernel as follows:
\begin{multline*}
\phi_{\mathbb{M}}(t)= \exp(it \tilde{\gamma}+ \int_{-\infty}^{\infty}(e^{itx}-1-itx)k_M(dx))), \ \mbox{where} \\
\tilde{\gamma}:= \gamma + \int_{-\infty}^{\infty}(1-1_{|x|<1}(x))\frac{e^{bx/a}}{2 \sinh(\pi x/a)}dx  = (a/2)\tan(b/2) \\ - \int_{1}^{\infty}\frac{e^{bx/a}-e^{-bx/a}}{2\sinh(\pi x/a)}dx 
 +\int_{(|x|\ge 1)}\frac{e^{bx/a}}{2 \sinh(\pi x/a)}dx  =a/2\tan(b/2), 
\end{multline*}
i.e., $\tilde{\gamma}=\frac{a}{2}\tan(b/2)$ and $\mathbb{M}=[a/2\tan(b/2),0,k_{\mathbb{M}}(x)]$ in Kolmogorov's representation.
Hence, in the random integral representation (2), we have
\begin{multline*}
Y_{\mathbb{M}}(1)=[\frac{a}{2}\tan(b/2), 0,h_{\mathbb{M}}(x)], \\
 h_{\mathbb{M}}(x)= (-xk_{\mathbb{M}}(x))^\prime =\frac{1}{4a}e^{bx/a}\frac{[e^{\pi x/a}(\pi -b)+e^{-\pi x/a}(b+\pi)]}{\sinh^{2}(\pi x/a)},
\end{multline*}
is  positive as $|b|<\pi$. Hence we infer that Meixner  $\mathbb{M}\in L_0$.

On the other hand, from (5)
\begin{multline*}
\mathbb{E}[\exp(itY_{\mathbb{M}}(1))] =  \psi_{\mathbb{M}}(t):= \exp[t (\log\phi_M(t))^\prime] = \exp[-t (\log \cosh((at-ib)/2))^\prime] \\  =\exp[-  at/2\tanh((at-ib)/2)].
\end{multline*}

All in all we get
\begin{cor}
From the selfdecomposability property of Meixner  $\mathbb{M}(a,b,1/2,0)$ variable with constants   $a>0, |b|<\pi$ we have the identity
\begin{multline*}
\int_{\Rset\setminus{\{(0)\}}}(e^{itx}-1-itx)\frac{1}{4a}e^{bx/a}\frac{e^{\pi x/a}(\pi-b)+e^{-\pi x/a}(b+\pi)}{\sinh^2(\pi x/a)}dx \\
= -i\frac{at}{2}\tan(b/2) -\frac{at}{2}\tanh(\frac{at-ib}{2})=  -i\frac{at}{2}\tan(b/2)- \frac{at}{2}\frac{\sinh(at)-i\sin(b)}{\cosh(at)+\cos(b)}\\
= - \frac{at}{2}\frac{\sinh(at)}{\cosh(at)+\cos(b)} - i \,\frac{at}{2}(\tan(b/2) +\frac{\sin(b)}{\cosh(at)+\cos(b)}).
 \end{multline*}
\end{cor}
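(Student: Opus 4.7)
The strategy is to apply the master identity (5) of Proposition 1 to the Meixner characteristic function, using the Kolmogorov kernel $e^{itx}-1-itx$ (legitimate by Remark 1(i) since $\mathbb{M}$ has finite second moment). The quantities needed on the left-hand side (the Kolmogorov triple of the BDRV $Y_{\mathbb{M}}(1)$) and on the right-hand side (the logarithmic derivative of $\phi_{\mathbb{M}}$) have all been prepared in the paragraph preceding the corollary; what remains is to assemble them and simplify.

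First I would record from the material above the statement that $\mathbb{M}=[\frac{a}{2}\tan(b/2),0,k_{\mathbb{M}}]$ in Kolmogorov's form. Non-negativity of $h_{\mathbb{M}}(x)$ is immediate from the standing hypothesis $|b|<\pi$, which ensures that both factors $\pi-b$ and $\pi+b$ are positive; hence by Proposition 1(c) we have $\mathbb{M}\in L_{0}$, and the BDRV admits the Kolmogorov triple $Y_{\mathbb{M}}(1)=[\frac{a}{2}\tan(b/2),0,h_{\mathbb{M}}]$, where the shift is unchanged from that of $\mathbb{M}$ (cf. Remark 1(i)).

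Next I would compute the right-hand side of (5). Writing $\log\phi_{\mathbb{M}}(t)=\log\cos(b/2)-\log\cosh((at-ib)/2)$ and differentiating gives $(\log\phi_{\mathbb{M}})^{\prime}(t)=-\frac{a}{2}\tanh((at-ib)/2)$, so that $t(\log\phi_{\mathbb{M}})^{\prime}(t)=-\frac{at}{2}\tanh((at-ib)/2)$. Inserting this, together with the Kolmogorov triple of $Y_{\mathbb{M}}(1)$, into (5) (written with the Kolmogorov kernel and drift $\tilde{\gamma}=\frac{a}{2}\tan(b/2)$) and transposing the $it\tilde{\gamma}$ term to the right-hand side delivers the first equality of the corollary.

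The remaining task is to split $\tanh((at-ib)/2)$ into real and imaginary parts. With $u=at/2$ and $v=b/2$, I would expand $\sinh(u-iv)$ and $\cosh(u-iv)$ by the addition formulas, multiply the resulting quotient by the conjugate of its denominator, and use the simplifications $\cosh^{2}u\cos^{2}v+\sinh^{2}u\sin^{2}v=\frac{1}{2}(\cosh(at)+\cos(b))$ and $\sinh u\cosh u-i\sin v\cos v=\frac{1}{2}(\sinh(at)-i\sin(b))$ to obtain $\tanh((at-ib)/2)=\frac{\sinh(at)-i\sin(b)}{\cosh(at)+\cos(b)}$. Substituting this into the first line of the right-hand side and collecting real and imaginary parts produces the final displayed form. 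I expect this last simplification to be the only delicate step: each manipulation is short, but the sign bookkeeping when separating real from imaginary contributions has to be done with care.
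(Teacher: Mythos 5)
Your proposal is correct and follows essentially the same route as the paper: establish the Kolmogorov triple $\mathbb{M}=[\frac{a}{2}\tan(b/2),0,k_{\mathbb{M}}]$, verify $h_{\mathbb{M}}\ge 0$ from $|b|<\pi$ to get $\mathbb{M}\in L_0$ with BDRV $[\frac{a}{2}\tan(b/2),0,h_{\mathbb{M}}]$, equate with $t(\log\phi_{\mathbb{M}})'=-\frac{at}{2}\tanh((at-ib)/2)$ via (5), and finish with the real/imaginary split of $\tanh((at-ib)/2)$ (which the paper records in the subsequent Remark). Your computation of the $\tanh$ decomposition checks out.
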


\begin{rem}

\emph{As an addition to the above :}

\emph{(i) $\tanh[(at-ib)/2]=\frac{at}{2}\frac{\sinh(at)-i\sin(b)}{\cosh(at)+\cos(b)}, \ \mbox{for real}\, a, b, t.$}

\emph{(ii) In particular, for the real and imaginary parts we have
\begin{multline*}
\int_{\Rset}(\cos(tx)-1)\frac{1}{4a}e^{bx/a}\frac{e^{\pi x/a}(\pi -b)+e^{-\pi x/a}(\pi+b)}{\sinh^2(\pi x/a)}dx 
=    - \frac{at}{2}\frac{\sinh(at)}{\cosh(at)+\cos(b)},
\end{multline*}
and
\begin{multline*}
\int_{\Rset}(\sin(tx)-tx))\frac{1}{4a}e^{bx/a}\frac{e^{\pi x/a}(\pi -b)+e^{-\pi x/a}(\pi+b)}{\sinh^2(\pi x/a)}dx  
\\ =  - \,\frac{at}{2}(\tan(b/2)+ \frac{\sin(b)}{\cosh(at)+\cos(b)}).
\end{multline*}
}
\end{rem}

\medskip
\medskip
\emph{(b). \underline{The Feller-Spitzer $FS$ distribution.}}

For $\nu>0$  and the modified Bessel function $I_{\nu}(x)$ we define  the probability density function $p_{\nu}(x):=e^{-x}\frac{\nu I_{\nu}(x)}{x}, \ 0<x<\infty$, which has
\emph{the Feller-Spitzer characteristic function}
\begin{equation}
\phi_{FS(\nu)}(t):=\int_0^\infty e^{itx}p_{\nu}(x)dx=[1-it-\sqrt{(1-it)^2-1}]^{\nu}, t \in \Rset,
\end{equation}
 cf. Feller  p. 414 and p.476 or Ushakov p.283; it is called there
 \emph{Bessel distribution}. For further generalizations of $FS$ distributions cf. Vinogradov and Paris (2021).

 From above we get that $FS$ variable is in ID class and it has L\'evy-Khintchine representation
 \begin{multline}
 \phi_{FS(\nu)}(t)=\exp\nu[ita+\int_0^\infty(e^{itx}-1-\frac{itx}{1+x^2})k_{FS}(x)dx ], \\
\mbox{where} \ \  k_{FS}(x):= \frac{e^{-x}I_0(x)}{x}1_{(0,\infty)}(x);   \    a:=\int_0^\infty \frac{e^{-x} I_{0}(x)}{1+x^2}dx
 \end{multline}
 cf. Jurek (2021) p. 103 and $FS=[a,0,k_{FS}] \in L_0$. Note that the above L\'evy measure density $k_{FS}(x)$
 coincides with the density $\tau_1(x)$, in Vinogradov and Paris (2021), Definition 2..

 \medskip
 For the BDRV $Y_{FS}(1)=[b,0,h_{FS}(x)]$ we get that
 $$ h_{FS}(x):= (-xk_{FS}(x))^\prime = (-e^{-x}I_0(x))^\prime = e^{-x}(I_{0}(x)-I_1(x))>0,$$
 is positive (by Jones (1968) or Paris and Vinogradov (2021) and the inequality (110)) and integrable to 1. For  the shift parameter, by (4), we have
 \begin{multline}
b=a+\int_\Rset(\frac{x}{1+x^2}-\arctan(x))h_{FS}(x)dx \\=
\int_0^\infty \frac{e^{-x}I_0(x)}{1+x^2}dx+\int_0^\infty(\frac{x}{1+x^2}-\arctan(x))e^{-x}(I_0(x)-I_1(x))dx \\
=    \int_0^\infty e^{-x}I_0(x)(\arctan(x))^{\prime}dx - \int_0^\infty \arctan(x))(-e^{-x}I_0(x))^{\prime}dx  \\ +     \int_0^\infty\frac{x}{1+x^2}e^{-x}(I_0(x)-I_1(x))dx = e^{-x}I_0(x)\arctan(x)|_{x=0}^{x=\infty}\\+\int_0^\infty\frac{x}{1+x^2}e^{-x}(I_0(x)-I_1(x))dx
=\int_0^\infty\frac{x}{1+x^2}e^{-x}(I_0(x)-I_1(x))dx,
 \end{multline}
 because $\lim_{x\to 0}e^{-x}I_0(x)\arctan(x)=\lim_{x\to \infty}e^{-x}I_0(x)\arctan(x)=0$.

Consequently, from above and (4) we have
\begin{multline*}
\psi_{FS}(t)=\exp( it (\int_0^\infty\frac{x}{1+x^2}e^{-x}(I_0(x)-I_1(x))dx)\\ +\int_0^\infty(e^{itx}-1- it \frac{x}{1+x^2})e^{-x}(I_0(x)-I_1(x))dx)\\=
\int_0^\infty(e^{itx}-1)e^{-x}(I_0(x)-I_1(x))dx,
\end{multline*}
which is the characteristic function of the compound Poisson distribution, thus $\psi_{FS}(t)\notin L_0,$ and $FS\in L_0\setminus L_1$.

On the other hand, by (5) we have that
 \begin{multline*}
 \psi_{FS}(t)=\exp ( t (\log(\phi_{FS}(t)))^\prime )\\ = \exp(t (\log(1-it-\sqrt{(1-it)^2-1}))^\prime)  = \exp(\frac{it}{\sqrt{-t(t+2i)}}
  \end{multline*}
 which gives
 \begin{cor}
 From the selfdecomposability property of the Feller-Spitzer distribution we have the identity
 $$
 \int_0^\infty(e^{itx}-1)\mu(dx) =\frac{it}{\sqrt{-t(t+2i)}}, \ t\in \Rset.
 $$
 where $\mu(dx):=e^{-x}(I_0(x)-I_1(x))1_{(0,\infty)}(x)dx$ is a probability measure. Both the right and the left expressions are logarithms of infinitely divisible characteristic functions (Poisson compound distributions) .
 \end{cor}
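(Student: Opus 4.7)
The proof amounts to equating the two expressions for $\log\psi_{FS}(t)$ that have been derived in the lines preceding the statement. The first comes from the Lévy-Khintchine representation of the BDRV triple $Y_{FS}(1)=[b,0,h_{FS}]$ with $h_{FS}(x)=e^{-x}(I_{0}(x)-I_{1}(x))$, while the second comes from Proposition~1(a), namely $\log\psi_{FS}(t)=t(\log\phi_{FS}(t))'$.

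First I would verify that $\mu$ is a probability measure. Using $xk_{FS}(x)=e^{-x}I_{0}(x)$ together with $I_{0}'=I_{1}$, one sees that $h_{FS}(x)=-(xk_{FS}(x))'=e^{-x}(I_{0}(x)-I_{1}(x))$ and that $-e^{-x}I_{0}(x)$ is one of its antiderivatives; combining this with $I_{0}(0)=1$ and the classical asymptotics $I_{0}(x)\sim e^{x}/\sqrt{2\pi x}$ as $x\to\infty$ yields $\int_{0}^{\infty}h_{FS}(x)\,dx=1$. Positivity of $h_{FS}$ is the already-cited inequality $I_{0}\ge I_{1}$. Hence the BDRV is compound Poisson and, as the excerpt points out, $FS\in L_{0}\setminus L_{1}$. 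Next, the shift computation carried out in the excerpt shows $b=\int_{0}^{\infty}\frac{x}{1+x^{2}}h_{FS}(x)\,dx$, so in the Lévy-Khintchine representation of $\psi_{FS}$ the compensator $-itx/(1+x^{2})$ cancels $itb$ pointwise under the integral, giving $\psi_{FS}(t)=\exp\int_{0}^{\infty}(e^{itx}-1)\mu(dx)$.

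For the right-hand side I would apply Proposition~1(a) directly to the closed form $\phi_{FS}(t)=1-it-\sqrt{(1-it)^{2}-1}$. Writing $u(t)=1-it$ so $u'(t)=-i$, a chain-rule differentiation of $\log(u-\sqrt{u^{2}-1})$ produces $-u'(t)/\sqrt{u^{2}-1}=i/\sqrt{(1-it)^{2}-1}$, and the radicand simplifies via $(1-it)^{2}-1=-t(t+2i)$, so multiplying by $t$ gives $it/\sqrt{-t(t+2i)}$. Equating the two expressions for $\log\psi_{FS}(t)$ and stripping the exponential delivers the identity of the corollary. The main obstacle is bookkeeping rather than computation: one must verify that the truncation kernel in (4) is neutralized by the drift $b$ (this is what justifies collapsing the Lévy-Khintchine integral to the compound-Poisson form on the left) and fix a consistent branch of the square root so that both sides are unambiguous logarithms of infinitely divisible characteristic functions. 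Once this is in place the identity follows essentially by inspection from Proposition~1.
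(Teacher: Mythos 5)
Your proposal is correct and follows essentially the same route as the paper: compute $h_{FS}(x)=(-xk_{FS}(x))'=e^{-x}(I_0(x)-I_1(x))$, show via the integration-by-parts identity for the shift that $b=\int_0^\infty\frac{x}{1+x^2}h_{FS}(x)\,dx$ so the truncation term cancels and $\psi_{FS}$ collapses to the compound-Poisson form, then equate with $\exp\bigl(t(\log\phi_{FS}(t))'\bigr)=\exp\bigl(it/\sqrt{-t(t+2i)}\bigr)$. Your added verifications (total mass one of $\mu$ via the antiderivative $-e^{-x}I_0(x)$, and the branch choice for the square root) are sound refinements of details the paper leaves implicit, not a different argument.
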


\medskip
\medskip
\textbf{References.}

\medskip
A. Araujo and E. Gine (1980), \emph{The central limit theorem for real and Banach space valued random variables},
John Wiley $\&$ Sons, New York.

\medskip
 P.Billingsley (1986), \emph{Probability and measure}, Second Edition, J.Wiley $\&$ Sons, New York

\medskip
 P. Carr , H. Geman, D. Madan and M. Yor (2007), Self-decomposability and option pricing, \emph{Math. Finance},  vol. 17, No 1 , pp.31-57.

\medskip
 J. deConinck (1984), Infinitely divisible distribution functions of class L and the Lee-Yang Theorem,
Comm. Math. Phys. vol. 96, pp. 373-385.

\medskip
J. deConinck and Z,J. Jurek (2000), Lee-Yang models, selfdecomposability and negative definite functions.
In: High dimensional probability II, E.Gine, D. M. Mason, J. A. Wellner Editors; \emph{Progress in Probab.}
vol.47, Birkhauser 2000, pp. 349-367.

\medskip
 W. Feller (1966), \emph{ An introduction to probability theory and its applications}, vol. II, New York,  J.Wiley$\&$ Sons.

\medskip
 B. V. Gnedenko and A. N. Kolomogorov (1954), \emph{Limit distributions for sums of independent random variables}, Addison-Wesley.

\medskip
B. Grigelionis (1999), Processes of Meixner type, \emph{Lithuanian Math. Journal}, 39(1), pp. 33-41.

\medskip
B. Grigelionis (2000), Generalized z-distributions and related stochastic processes,
\emph{Mathematikos Ir Informaticos Institutas Preprintas}, Nr 2000-22. Vilnius.

\medskip
A. Jones (1968), \emph{An extension of an inequality involving modified Bessel functions},
\emph{J. Math. Phys.} 47, pp.220-221.

\medskip
 Z. J. Jurek (1982), An integral representation of operator-selfdecomposable random variables, \emph{Bull. Acad. Polon. Sci.}
   vol. 30, pp.385-393.

\medskip
Z. J. Jurek (1983), The classes$L_m(Q)$  of probability measures on Banach spaces, \emph{Bull. Acad. Polon. Sci.}, vol. 13, pp. 578-604.

\medskip
Z. J. Jurek (1996), Series of independent random variables, Proc. $7^th$ Japan-Russia Symposium, Tokyo 26-30 July 1995;
S. Watanabe, M. Fukushima, Yu.V. Prohorov, and A. N. Shiryaev Editors; World Scientific, pp. 174-182.

\medskip
Z. J. Jurek (1997),  Selfdecomposability: an exception or a rule? \emph{Annales Uni. M. Curie-Skłodowska},
Lublin -Polonia, vol. LI.1,10, Sectio A, pp. 93-107.

\medskip
Z. J. Jurek (2001a), 1-D Ising models, geometric random sums and selfdecomposability, \emph{Reports on Math. Physics} , vol. 47, pp.21-30.

\medskip
 Z. J.  Jurek (2001b), Remarks on the selfdecomposability and new examples, \emph{Demonstratio Math.} ,vol. XXXIV, no 2, pp.241-250.

\medskip
 Z. J. Jurek (2021), On background driving distribution functions  (BDDF) for some selfdecomposable variables,
\emph{Mathematica Applicanda} , vol 49(2), p.85-109.

\medskip
Z. J. Jurek (2022), Background driving distribution functions and series representations
 for log-gamma self-decomposable random variables, \emph{Theor. Probab. Appl.} vol.67, no 1, pp. 105-117.

\medskip
Z. J. Jurek (2023), Which Urbanik class $L_k$, do the hyperbolic and the generalized logistic characteristic functions belong to?,   math. arXiv:2211.17064v1 [math.PR] 30Nov 2022.

\medskip
 Z. J. Jurek and J.D. Mason (1993), \emph{Operator-limit distributions in probability theory},  John Wiley $\&$ Sons, Inc., New York

\medskip
 Z.J. Jurek and W. Vervaat (1983),  An integral representation for selfdecomposable Banach space valued random variables,
\emph{Z. Wahrscheinlichkeitstheorie und verw. Gebiete}, vol.62, pp.51-62.

\medskip
 Z. J. Jurek and Yor (2004), Selfdecomposable laws associated with hyperbolic functions,
\emph{Probab. Math. Stat.} vol.24 Fasc. 1, pp. 181-190.

\medskip
 M. Loeve (1963), \emph{ Probability theory}, 3rd Edition, D.Van Nostrand Company, Inc. Princeton.

\medskip
 J. Pitman and M. Yor (2003), Infinitely divisible laws associated with hyperbolic functions,
\emph{Canad. J. Math.} vol. 55 (2), pp. 292-330.

\medskip
 W. Schoutens, (2003), \emph{L\'evy processes in finance. Pricing financial derivatives,}  J. Wiley and Sons, England.

\medskip
 F. Steutel and K. van Harn (2004), \emph{Infinite divisibility of probability distributions on the real line }, Marcel Dekker Inc.
 \emph{ J. Wiley and Sons, England}.

 \medskip
 K. Urbanik (1972), Slowly varying sequences of random variables, \emph{Bull. de L'Acad. Polon. Sciences;  Ser. Math. Astr. Phys.}
 28:2, pp. 679-682.

\medskip
K. Urbanik (1973), Limit laws for sequences of normed sums satisfying some stability conditions; Proc. 3rd Internstional Symp.  Multivar. Analysis,
Wright State University, Dayton Ohio, USA, June 19-24, 1972;  Academic Press 1973.

[ Also on: www.math.uni.wroc.pl/~zjjurek/urb-limitLawsOhio1973.pdf]

\medskip
 V. V. Vinogradov  and R. B. Paris (2021), On two extensions of the canonical Feller-Spitzer distribution,
\emph{ J. Stat. Distributions and Appl.}, (2021)8:3, https://doi.org/101186/s40488-021-0013-4

\end{document}